\documentclass[12pt]{amsart}
\usepackage{amssymb,amsmath,mathrsfs}
\usepackage[colorlinks=true,urlcolor=blue,
citecolor=red,linkcolor=blue,linktocpage,pdfpagelabels,
bookmarksnumbered,bookmarksopen]{hyperref}
\usepackage[active]{srcltx}
\usepackage{verbatim}
\usepackage{epsfig,graphicx,color,mathrsfs}
\usepackage{graphicx}
\usepackage{amsmath,amssymb,amsthm,amsfonts}
\usepackage{amssymb}
\usepackage[english]{babel}
\usepackage[left=2.7cm,right=2.7cm,top=3cm,bottom=3cm]{geometry}

\newtheorem{thm}{Theorem}[section]

\newtheorem{lem}[thm]{Lemma}

\newtheorem{defn}[thm]{Definition}

\newcommand{\R}{{\mathbb{R}}}

\renewcommand{\rho}{\varrho}
\renewcommand{\theta}{\vartheta}

\makeatletter
\@namedef{subjclassname@2020}{%
	\textup{2020} Mathematics Subject Classification}
\makeatother

\begin{document}

\subjclass[2000]{35J70; 35J62; 35B06}

\parindent 0pc
\parskip 6pt
\overfullrule=0pt

\title[The moving plane method]{The moving plane method for  doubly singular elliptic equations involving a first order term}

\author{F. Esposito* and B. Sciunzi*}

\date{\today}

\address{* Dipartimento di Matematica e Informatica, UNICAL,
Ponte Pietro  Bucci 31B, 87036 Arcavacata di Rende, Cosenza, Italy.}

\email{sciunzi@mat.unical.it}
\email{francesco.esposito@unical.it}

\keywords{Semilinear elliptic equations, singular solutions,
qualitative properties, first order term, singular nonlinearity}

\subjclass[2020]{35A21, 35B06, 35B50, 35J61, 35J75}

\thanks{F. Esposito and B. Sciunzi were partially supported by PRIN project  2017JPCAPN (Italy): {\em Qualitative and quantitative aspects of nonlinear PDEs}.}
%\subjclass[2000]{Primary: ; Secondary: }

\maketitle

\date{\today}

\begin{abstract}
In this paper we deal with positive singular solutions to semilinear elliptic problems involving a first order term and a singular nonlinearity. Exploiting a fine adaptation of the well-known moving plane method of Alexandrov-Serrin and a careful choice of the cutoff functions, we deduce symmetry and monotonicity properties of the solutions.
\end{abstract}

\section{Introduction}
The aim of this work is to investigate qualitative properties of singular solutions of semilinear elliptic problems involving a first order term and a singular nonlinearity. In order to achieve our purpose, we consider the following problem under zero Dirchlet boundary condition
\begin{equation} \label{problem}
\begin{cases}
\displaystyle -\Delta u + |\nabla u|^2 \,= \frac{1}{u^\gamma} + f(x,u)& \text{in}\quad\Omega\setminus \Gamma  \\
u> 0 &  \text{in}\quad\Omega\setminus \Gamma  \\
u=0 &  \text{on}\quad\partial \Omega\,
\end{cases}
\end{equation}
where $\gamma >1$, $\Omega$ is a bounded smooth domain of $\mathbb{R}^n$ with $n\geq 2$ and the nonlinearity $f$ is assumed to be a uniformly locally Lipschitz continuous function far from the singular set $\Gamma$ (see Definition \ref{def:nonlinAssump}). We obtain our result by adapting the celebrated moving plane technique introduced by Alexandrov  in his
study of surfaces of constant mean curvature (see \cite{A}) and by Serrin in the context of overdetermined problems for partial differential equations (see \cite{serrin}). Moreover, we take into account the variant developed in the seminal papers of Gidas-Ni-Nirenberg \cite{GNN} and Berestycki-Nirenberg \cite{BN}. We want to stress that this method is very powerful and can be adapted to several problems, hence, for this reason, we refer the reader to \cite{BN1, BN2, ChenLin, CirRon, lucio, DamPac1, DamPac2, DS1, Dan2, EST, Gidas, Li, Monticelli, Dino} for some related works regarding such a technique in bounded domains. This procedure can be performed in general domains providing partial monotonicity results near the boundary and symmetry when the domain is convex and symmetric. To this end, for simplicity of exposition, we assume directly in all the paper that $\Omega$ is a convex domain which is symmetric with respect to the hyperplane $\Pi_0:=\{x_1=0\}$. The solution has a possible singularity on the critical set $\Gamma\subset \Omega$. 

The literature regarding singular solutions is really wide and we have tried to outline all the appropriate references. We apologize if we do not find all the related works. First of all, we suggest the reading of the book of V\'eron \cite{veron} and the references therein. Moreover, for the adaptation of the moving plane in the case of a point singularity we refer to the seminal papers of Terracini \cite{T} and of Caffarelli-Li-Nirenberg \cite{CLN2}.\\
The case $\Gamma=\emptyset$ with singular nonlinearities, without  the first order term, has been widely investigated
in the literature. We refer the readers to the pioneering work of Crandall-Rabinowitz-Tartar \cite{crandall} and to
\cite{boccardo, CES, gras1, luigi, nodt, ES, lazer, oliva, stuart} for other contributions for semilinear elliptic equations involving singular nonlinearity. In
particular, by \cite{lazer}, it is known that
 solutions generally have no $H^1$-regularity up to the boundary. Therefore, having in mind this work,  the natural assumption in our paper is
$$u \in H_{loc}^1(\Omega \setminus \Gamma) \cap
C(\overline{\Omega} \setminus \Gamma),$$
and thus the equation is
understood in the weak distributional meaning, i.e.
\begin{equation}\label{debil1}
\int_\Omega \langle \nabla u ,\nabla \varphi \rangle\,dx + \int_\Omega |\nabla u|^2 \varphi \, dx\,=\, \int_\Omega  \frac{\varphi}{u^\gamma} \, dx + \int_\Omega f(x,u)\varphi\,dx,\qquad\text{for all } \varphi\in C^{1}_c(\Omega\setminus\Gamma)\,.
\end{equation}

We point out that, since the right hand side of \eqref{problem} is locally bounded, by standard elliptic regularity theory, it follows that $u \in C_{loc}^{1,\alpha} (\Omega \setminus \Gamma)$, where $0 < \alpha < 1$.

Now we are ready to state the main result of this work:

\begin{thm}\label{main}
Suppose that $\Omega$ is a convex domain which is symmetric with respect to the hyperplane $\Pi_0=\{x_1=0\}$ and let $u\in H^1_{loc}(\Omega\setminus\Gamma)\cap
C(\overline\Omega\setminus\Gamma)$ be a solution to \eqref{problem}.
Assume that $f$ fulfills $(\mathbf{hp}_f)$ (see Definition \ref{djkfhsdjk}), and that $\Gamma$ is a point if $n=2$, while
 it is closed and such that
$$\underset{\R^n}{\operatorname{Cap}_2}(\Gamma)=0,$$
if $n\geq 3$.
\noindent Then, if $\Gamma\subset\Pi_0$, it follows that   $u$
is symmetric with respect to the hyperplane $\Pi_0$ and
increasing in the $x_1$-direction in $\Omega\cap\{x_1<0\}$.
Furthermore,
\[\frac{\partial u}{\partial x_1}>0\qquad \text{in}\quad \Omega\cap\{x_1<0\}\,.
\]
\end{thm}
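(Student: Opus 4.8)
The plan is to reduce \eqref{problem} to a purely second‑order semilinear problem and then to run the Alexandrov--Serrin moving plane method on it, neutralising the singular set by a capacitary cutoff. Set $v:=1-e^{-u}$, so that $0<v<1$ in $\Omega\setminus\Gamma$, $v=0$ on $\partial\Omega$, and, since $|\nabla v|=e^{-u}|\nabla u|\le|\nabla u|$, also $v\in H^1_{loc}(\Omega\setminus\Gamma)\cap C(\overline\Omega\setminus\Gamma)$. As $u\in C^{1,\alpha}_{loc}(\Omega\setminus\Gamma)$, the function $e^{-u}\varphi$ is admissible in \eqref{debil1} for every $\varphi\in C^1_c(\Omega\setminus\Gamma)$; combining this with the identity $\int\langle\nabla v,\nabla\varphi\rangle=\int\langle\nabla u,\nabla(e^{-u}\varphi)\rangle+\int e^{-u}|\nabla u|^2\varphi$ one checks that $v$ solves weakly in $\Omega\setminus\Gamma$
\[
-\Delta v = g(x,v):=\underbrace{(1-v)\big(\log\tfrac1{1-v}\big)^{-\gamma}}_{=:\,g_1(v)}+\underbrace{(1-v)\,f\big(x,\log\tfrac1{1-v}\big)}_{=:\,g_2(x,v)} .
\]
Here $g_1$ is strictly decreasing on $(0,1)$, while $g_2(x,\cdot)$ is uniformly locally Lipschitz on compact subsets of $\overline\Omega\setminus\Gamma$ (indeed $g_2(x,v)\to f(x,0)$ as $v\to0$, so it stays bounded near $\partial\Omega$) and, by $(\mathbf{hp}_f)$, has the monotonicity in $x_1$ that the method requires. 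Since $t\mapsto-\log(1-t)$ is increasing and $\partial_{x_1}u=(1-v)^{-1}\partial_{x_1}v$, it suffices to prove that $v$ is symmetric about $\Pi_0$ and $\partial_{x_1}v>0$ in $\Omega\cap\{x_1<0\}$.

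\textbf{Step 2 (weak comparison on small pieces).}
For $a_1:=\inf_\Omega x_1<\lambda\le0$ put $\Omega_\lambda:=\Omega\cap\{x_1<\lambda\}$, $x_\lambda:=(2\lambda-x_1,x')$, $v_\lambda(x):=v(x_\lambda)$; then $v_\lambda$ solves the reflected equation in $\Omega_\lambda$, singular at most on the reflected set $\Gamma_\lambda$, which still has zero $2$‑capacity (a point if $n=2$). Choose cutoffs $\varphi_\varepsilon$, $0\le\varphi_\varepsilon\le1$, vanishing near $\Gamma\cup\Gamma_\lambda$, equal to $1$ on $\overline\Omega$ off an $\varepsilon$‑neighbourhood of it, with $\|\nabla\varphi_\varepsilon\|_{L^2}\to0$ — possible precisely because $\operatorname{Cap}_2(\Gamma\cup\Gamma_\lambda)=0$. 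The function $w_\lambda:=(v-v_\lambda)^+$ lies in $H^1_0(\Omega_\lambda)$ (it vanishes on $\partial\Omega\cap\{x_1\le\lambda\}$, where $v=0\le v_\lambda$, and on $\{x_1=\lambda\}$, and $\Gamma_\lambda$ is removable). Testing the equations for $v$ and $v_\lambda$ with $w_\lambda\varphi_\varepsilon^2$, subtracting, and discarding the sign‑favourable terms — $(g_1(v)-g_1(v_\lambda))w_\lambda\le0$ on $\{v>v_\lambda\}$ (which also frees us from any Lipschitz control of $g_1$ at $v=0$) and the $x_1$‑monotonicity contribution of $g_2$ from $(\mathbf{hp}_f)$ — gives
\[
\int_{\Omega_\lambda}|\nabla w_\lambda|^2\varphi_\varepsilon^2 \;\le\; 2\int_{\Omega_\lambda}w_\lambda\,\varphi_\varepsilon\,|\nabla w_\lambda|\,|\nabla\varphi_\varepsilon| \;+\; L\int_{\Omega_\lambda}w_\lambda^2\varphi_\varepsilon^2 ,
\]
with $L$ the Lipschitz constant of $g_2(x,\cdot)$ on $\operatorname{supp}\varphi_\varepsilon$. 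As $0\le w_\lambda<1$, Young's inequality absorbs the first term and bounds the $\nabla\varphi_\varepsilon$‑contribution by $2\|\nabla\varphi_\varepsilon\|_{L^2}^2$; letting $\varepsilon\to0$ (monotone convergence) and using the Sobolev bound $\int_{\Omega_\lambda}w_\lambda^2\le C|\{v>v_\lambda\}|^{\sigma}\int_{\Omega_\lambda}|\nabla w_\lambda|^2$ (some $\sigma>0$) yields $\int_{\Omega_\lambda}|\nabla w_\lambda|^2\le 2LC|\{v>v_\lambda\}|^{\sigma}\int_{\Omega_\lambda}|\nabla w_\lambda|^2$. Hence $w_\lambda\equiv0$, i.e. $v\le v_\lambda$ in $\Omega_\lambda$, whenever $|\{v>v_\lambda\}|$ (in particular $|\Omega_\lambda|$) is small; this initialises the procedure for $\lambda$ near $a_1$.

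\textbf{Step 3 (sliding and conclusion).}
Let $\bar\lambda:=\sup\{\lambda\in(a_1,0]:v\le v_\mu\text{ in }\Omega_\mu\ \forall\mu\in(a_1,\lambda]\}$. If $\bar\lambda<0$, then $v\le v_{\bar\lambda}$ in $\Omega_{\bar\lambda}$ by continuity, and since $v_{\bar\lambda}-v$ is a nonnegative supersolution of a linear elliptic equation with bounded coefficient on compact subsets of $\Omega_{\bar\lambda}\setminus\Gamma_{\bar\lambda}$ (connected, $\Gamma_{\bar\lambda}$ having zero capacity), the strong maximum principle gives $v<v_{\bar\lambda}$ there, the case $v\equiv v_{\bar\lambda}$ being excluded by $\bar\lambda<0$ and the boundary condition. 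Then for $\lambda\in(\bar\lambda,\bar\lambda+\delta)$, strict inequality on a large compact subset of $\Omega_{\bar\lambda}\setminus\Gamma_{\bar\lambda}$ (compactness and continuity) confines $\{v>v_\lambda\}$ to the union of a neighbourhood of $\Gamma_\lambda$ of small capacity and a set of small measure; rerunning Step 2 forces $w_\lambda\equiv0$, contradicting maximality. So $\bar\lambda=0$ and $v\le v_0$ in $\Omega\cap\{x_1<0\}$; by $(\mathbf{hp}_f)$ and the symmetry of $\Omega$, the mirror argument gives $v\ge v_0$, hence $v\equiv v_0$ and $v$ (so $u$) is symmetric about $\Pi_0$. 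Finally, $v\le v_\lambda$ in $\Omega_\lambda$ for all $\lambda\in(a_1,0)$ shows $v$ is nondecreasing in $x_1$ on $\{x_1<0\}$; the strong maximum principle applied to $v_\lambda-v\ge0$ in $\Omega_\lambda\setminus\Gamma_\lambda$, followed by Hopf's lemma on $\{x_1=\lambda\}\cap\Omega$ (disjoint from $\Gamma_\lambda$ since $\lambda<0$), gives $\partial_{x_1}v=-\tfrac12\,\partial_{x_1}(v_\lambda-v)>0$ there for every $\lambda\in(a_1,0)$, whence $\partial_{x_1}v>0$ throughout $\Omega\cap\{x_1<0\}$ and $\partial_{x_1}u=(1-v)^{-1}\partial_{x_1}v>0$.

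\textbf{The main obstacle} is making these cutoff arguments rigorous at the two singular loci. Near $\Gamma$ and its reflections one must check that the gradient error term genuinely vanishes as $\varepsilon\to0$ and that $w_\lambda\varphi_\varepsilon^2$ is an admissible test function: this is exactly where $\operatorname{Cap}_2(\Gamma)=0$ enters (it supplies cutoffs with vanishing Dirichlet energy), together with the boundedness $0<v<1$ forced by the substitution, which lets $\int w_\lambda^2|\nabla\varphi_\varepsilon|^2$ be dominated by $\|\nabla\varphi_\varepsilon\|_{L^2}^2\to0$ with no a priori summability of $u$ or $\nabla u$ near $\Gamma$. Near $\partial\Omega$ one must in addition test with functions supported up to the boundary even though $g_1(v)\sim v^{-\gamma}$ may fail to be integrable there for $\gamma\ge2$; this is absorbed by a further cutoff vanishing near $\partial\Omega$, exploiting the sign of $g_1(v)-g_1(v_\lambda)$ and the fact that $w_\lambda$ vanishes in a neighbourhood of the part of $\partial\Omega$ where $v_\lambda>0$, and passing to the limit by Fatou — this careful handling of the cutoffs, together with the critical position $\bar\lambda\to0$ at which $\Gamma\subset\Pi_0$ lies on the moving plane itself, is the delicate part of the argument.
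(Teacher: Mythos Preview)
Your approach is correct and takes a genuinely different route from the paper. The paper never performs the substitution $v=1-e^{-u}$; instead it keeps $u$ and builds the exponential weight into the \emph{test functions}, plugging $\varphi_{1,\lambda}=e^{-u}w_\lambda^+\psi_\varepsilon^2\phi_\tau^2$ into the weak formulation for $u$ and $\varphi_{2,\lambda}=e^{-u_\lambda}w_\lambda^+\psi_\varepsilon^2\phi_\tau^2$ into the one for $u_\lambda$, so that the $|\nabla u|^2$ (resp.\ $|\nabla u_\lambda|^2$) term cancels against the term produced by differentiating $e^{-u}$ (resp.\ $e^{-u_\lambda}$). Subtraction then leaves several cross terms involving $(e^{-u}-e^{-u_\lambda})$ and $|\nabla u|$, which are estimated via the Lipschitz bound for $t\mapsto e^{-t}$ together with $\|\nabla u\|_{L^\infty(\Omega_\lambda)}$; the singular term is handled through the monotonicity of $t\mapsto e^{-t}t^{-\gamma}$. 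Your change of variables is the global version of this weighted testing: it collapses all those cross terms at once and reduces the problem to exactly the setting of \cite{EFS,Dino}, with the bonus that $0<v<1$ gives the $L^\infty$ control for free. What the paper's approach buys is that it stays at the level of $u$ and could in principle be adapted to gradient terms that are not removable by an explicit substitution; what your approach buys is a shorter and more transparent argument.

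Two minor points of presentation. First, your claim that $w_\lambda\in H^1_0(\Omega_\lambda)$ at the outset of Step~2 is not a hypothesis but precisely the conclusion of the cutoff computation you then carry out (this is the analogue of the paper's Lemma~\ref{leaiuto2}); you should phrase it that way, since $u\notin H^1$ up to $\partial\Omega$ in general. Second, the extra boundary cutoff you mention is needed only in a neighbourhood of the corner set $\gamma_\lambda=\partial\Omega\cap\{x_1=\lambda\}$: away from $\gamma_\lambda$, on $\partial\Omega\cap\{x_1<\lambda\}$ one has $v=0<v_\lambda$ (the reflected point lies in the interior by convexity), so $w_\lambda^+$ already vanishes on a full neighbourhood and no truncation of $g_1$ is required there. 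This is exactly the role of the paper's cutoff $\phi_\tau$, which exploits $\mathrm{Cap}_2(\gamma_\lambda)=0$.
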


In the proof of Theorem \ref{main} we develop a careful adaptation of the moving plane method that is quite recent and it was introduced in \cite{EFS, Dino} in order to deal with singular solutions of semilinear elliptic problems driven by the classical
Laplacian operator. In our work, the presence of the first order term $|\nabla u|^2$ and of the singular nonlinearity $u^{-\gamma}$ pushed us to adapt the procedure proposed in \cite{EFS, Dino} with a careful choice of the cut-off functions. To the best of our knowledge, Theorem \ref{main} is new and extends some of the results contained in \cite{EFS,Dino} to the case involving first order terms. This technique is so powerful and flexible that  covers also the following cases: unbounded sets  \cite{EFS,Dino}, the $p$–Laplacian operator \cite{EMS, MontoroHarnack}, double phase operators \cite{BEV}, cooperative elliptic systems \cite{BVV0, BVV, esposito}, the fractional Laplacian \cite{mps} and mixed local–nonlocal elliptic operators \cite{BDVV}.

\section{Notations and preliminary results} \label{notations}

The aim of this section is to fix once and for all the relevant
notations used throughout the paper, and to present some auxiliary results, proved in \cite{EFS}, which shall be key ingredients for the proof of Theorem \ref{main}.\\
As pointed out in the introduction, in all the paper we suppose that the nonlinearity $f$ is uniformly locally Lipschitz continuous far from the singular set $\Gamma$. More precisely, we state the following:
\begin{defn}[$\mathbf{hp}_f$] \label{def:nonlinAssump}
	\label{djkfhsdjk} We say that $f$ fulfills the condition $(\mathbf{hp}_f)$
	if $f: {\overline{ \Omega }}\setminus \Gamma \times (0,+\infty )
	\rightarrow {\mathbb{R}}$ is a continuous function such that for $0	\, \leq s,t\leq \Lambda$ and for any compact set $\mathcal{K}\subset
	{\overline{\Omega }} \setminus \Gamma $, it holds
	\begin{equation}
		\nonumber |f(x,s)-f(x,t)|\leq \mathcal{L}_f(\Lambda, \mathcal{K})|s-t|, \qquad \text{for all}\quad
		x\in \mathcal{K}\,,
	\end{equation}
	where $\mathcal{L}_f(\Lambda, \mathcal{K})$ is a positive constant depending on $\Lambda$ and $\mathcal{K}$.
	Furthermore, $f(\cdot ,s)$ is non-decreasing in the $x_{1}$-direction
	in $\Omega \cap \{x_{1}<0\}$ and symmetric with respect to the
	hyperplane $\Pi_0$.
\end{defn}

Let $\Gamma \subset \Omega \subset \R^n$ be as in the statement
of Theorem \ref{main}, and let
$u \in H_{loc}^1(\Omega \setminus \Gamma) \cap
C(\overline{\Omega} \setminus \Gamma)$ be a solution
of \eqref{problem}. For any fixed $\lambda\in\R$, we denote by $R_\lambda$
the reflection trough the hy\-per\-pla\-ne $\Pi_\lambda := \{x_1 = \lambda\}$,
that is,
\begin{equation} \label{eq.defRlambda}
	R_\lambda(x) = x_\lambda := (2\lambda-x_1,x_2,\ldots,x_N)
	\qquad (\text{for all $x\in\R^n$}).
\end{equation}
Hence, we define the function
\begin{equation} \label{eq.defulambda}
	u_\lambda  := u \circ R_\lambda.
\end{equation}
We point out that, since $u$ solves \eqref{problem}, one has
\begin{itemize}
	\item $u_\lambda\in  H^1_{loc}(R_\lambda(\Omega\setminus\Gamma)) \cap C(R_\lambda(\overline{\Omega}\setminus\Gamma)) $;
	\item $u_\lambda > 0$ in $R_\lambda(\Omega\setminus\Gamma)$ and 
	$u_\lambda \equiv 0$ on $R_\lambda(\partial\Omega\setminus\Gamma)$;
	\item for every test function $\varphi\in C^1_c(R_\lambda(\Omega\setminus\Gamma))$
	one has
\begin{equation}\label{debil2}
	\int_{R_\lambda(\Omega)} \langle \nabla u_\lambda, \nabla
	\varphi \rangle \,dx \, + \int_{R_\lambda(\Omega)} |\nabla u|^2 \varphi \, dx\,=\, \int_{R_\lambda(\Omega)}  \frac{\varphi}{u^\gamma} \, dx \, + \, \int_{R_\lambda(\Omega)} f(x_\lambda,u)\varphi\,dx\,
\end{equation}
and we also observe that, for any  $a<\lambda<0$, the function $w_\lambda\,:=\,u-u_\lambda$ satisfies  $0 \le w_\lambda^+ \le u $ a.e. on  ${\Omega_{\lambda}}$ and so $w_\lambda^+ \in L^\infty(\Omega_{\lambda})$, since  $u \in C(\overline{\Omega}_{\lambda})$.
\end{itemize}
To proceed further, we define
\begin{equation} \label{eq.defaOmega}
	\mathbf{a} = \mathbf{a}_\Omega := \inf_{x\in\Omega}x_1
\end{equation}
and we observe that since $\Omega$ is bounded and symmetric with respect to
the $x_1$-direction,
we certainly have $-\infty < \mathbf{a} < 0$.
Hence, for every $\lambda\in(\mathbf{a},0)$ we can set
\begin{equation} \label{eq.defOmegalambda}
	\Omega_\lambda := \{x\in\Omega:\,x_1<\lambda\}.
\end{equation}
Notice that since
$\Omega$ is convex in the $x_1$-direction, then
\begin{equation} \label{eq.inclusionOmegalambda}
	\Omega_\lambda\subseteq R_\lambda(\Omega)\cap \Omega.
\end{equation}
Finally, for every $\lambda\in(\mathbf{a},0)$, we define the function
$$w_\lambda(x) := (u-u_\lambda)(x), \qquad \text{for
	$x\in (\overline{\Omega}\setminus\Gamma)\cap R_\lambda(\overline{\Omega}\setminus\Gamma)$}.$$
Taking into account \eqref{eq.inclusionOmegalambda}, we deduce that
$w_\lambda$ is well-defined on
$\overline{\Omega}_\lambda\setminus R_\lambda(\Gamma)$.

Now we say that it can be easily shown that if
$\underset{\R^n}{\operatorname{Cap}_2}(\Gamma)=0$, then
$\underset{\R^n}{\operatorname{Cap}_2}(R_{\lambda}(\Gamma))=0$.
Another consequence of this fact is that
$\underset{B^{\lambda}_{\epsilon}}{\operatorname{Cap}_2}(R_{\lambda}(\Gamma))=0$
for any open neighborhood $\mathcal{B}^{\lambda}_{\epsilon}$ of
$R_{\lambda}(\Gamma)$.
Indeed, recalling that $\Gamma$ is a point if $n=2$, while
$\Gamma$ is closed with
$\underset{\R^n}{\operatorname{Cap}_2}(\Gamma)=0$
if $n\geq 3$ by assumption, it follows that
$$\underset{\mathcal{B}^{\lambda}_{\epsilon}}{\operatorname{Cap}_2}(R_{\lambda}(\Gamma)):=
\inf \left\{ \int_{\mathcal{B}^{\lambda}_{\epsilon}} |\nabla \varphi
|^2 dx < + \infty \; : \; \varphi \geq 1 \ \text{in} \
\mathcal{B}^{\lambda}_{\delta}, \; \varphi \in \
C^{\infty}_c(\mathcal{B}^{\lambda}_{\epsilon}) \right\}=0,$$
\noindent for some neighborhood $\mathcal{B}^{\lambda}_{\delta}
\subset \mathcal{B}^{\lambda}_{\varepsilon}$ of
$R_{\lambda}(\Gamma)$. As a consequence (see \cite{evans} for more details) there exists
$\varphi_{\varepsilon} \in
C^{\infty}_c(\mathcal{B}^{\lambda}_{\varepsilon})$ such that
\begin{equation} \label{eq.choicephieps}
	\text{$\varphi_\varepsilon\geq 1$ on $R_\lambda(\Gamma)$}\qquad
	\text{and}\qquad
	\int_{\mathcal{B}^{\lambda}_{\varepsilon}} |\nabla
	\varphi_{\varepsilon}|^2\,dx < \varepsilon.
\end{equation}
We then consider the Lipschitz function
$$T(s)= \begin{cases}
	1 &  \text{if}\quad s \le 0  \\
	-2s + 1 & \text{if}\quad 0 \le s \le \frac{1}{2} \\
	0 &  \text{if}\quad s \ge \frac{1}{2}  \\
\end{cases}$$
and we define 
\begin{equation} \label{test1}
	\psi_{\varepsilon} :=  T \circ \varphi_{\varepsilon}.
\end{equation}
In view of \eqref{eq.choicephieps}, and taking into account
the  definition of $T$, it is not difficult to show that
$\psi_\varepsilon \equiv 1$ on $\R^n\setminus \mathcal{B}^\lambda_\varepsilon$ and $\psi_\varepsilon\equiv 0$ on the neighborhood $\mathcal{B}^{\lambda}_{\delta}$ of $R_\lambda(\Gamma)$, $0\leq \psi_\varepsilon\leq 1$ on $\R^n$, $\psi_\varepsilon$ is Lipschitz-continuous in $\R^n$. Moreover, there exists a constant $\Upsilon_1  > 0$, independent of $\varepsilon$, such that
$$\int_{\R^n}|\nabla \psi_{\varepsilon}|^2\,  dx = \int_{\mathcal{B}^\lambda_\varepsilon \setminus \mathcal{B}^\lambda_\delta}|\nabla \psi_{\varepsilon}|^2\,  dx \leq \Upsilon_1 \int_{\mathcal{B}^\lambda_\varepsilon}|\nabla \varphi_{\varepsilon}|^2\,  dx \leq  \Upsilon_1 \varepsilon.$$

\medskip

Now we  set $\gamma_\lambda:= \partial \Omega \cap T_{\lambda}$. As pointed out in \cite{EFS}, recalling that $\Omega$ is convex, it is easy to  deduce that $\gamma_\lambda$ is made of two points in dimension two. If else $n\geq3$ then it follows that
$\gamma_\lambda$ is a smooth manifold of dimension $n-2$.
Moreover, it can be shown that this implies 
$\underset{\R^n}{\operatorname{Cap}_2}(\gamma_\lambda)=0$, see  e.g.
\cite{evans}.
So, as before, $\underset{\mathcal{I}^\lambda_{\tau}}{\operatorname{Cap}_2}(\gamma_\lambda)=0$
for any open neighborhood of $\gamma_\lambda$ and then
there exists $\varphi_{\tau} \in C^{\infty}_c (\mathcal{I}^\lambda_{\tau})$ such that
$\varphi_{\tau} \geq 1$ in a neighborhood $\mathcal{I}^\lambda_{\sigma}$ with
 $\gamma_\lambda\subset\mathcal{I}^\lambda_{\sigma} \subset
\mathcal{I}^\lambda_{\tau}$. As above, we set
\begin{equation}\label{test2}
	\phi_{\tau} :=T \circ \varphi_{\tau},
\end{equation}
where we have extended  $\varphi_{\tau}$ by zero outside $\mathcal{I}^\lambda_{\tau}$. Then $\phi_\tau \equiv 1$ on $\R^n\setminus \mathcal{B}^\lambda_\varepsilon$ and $\phi_\tau \equiv 0$ on the neighborhood $\mathcal{I}^\lambda_{\sigma}$ of $R_\lambda(\Gamma)$, $0\leq \phi_\tau \leq 1$ on $\R^n$, $\phi_\tau$ is Lipschitz-continuous in $\R^n$; moreover, there exists a constant $\Upsilon_2  > 0$, independent of $\tau$, such that
$$\int_{\R^n} |\nabla \phi_\tau|^2 dx = \int_{\mathcal{I}^\lambda_{\tau} \setminus \mathcal{I}^\lambda_{\sigma}} |\nabla \phi_\tau|^2 dx \leq \Upsilon_2  \int_{\mathcal{I}^\lambda_{\tau}} |\nabla \varphi_\tau|^2 dx  \leq  \Upsilon_2 \tau.$$

To proceed further, in the same spirit of \cite{EFS}, we need of the following result:

\begin{lem}\label{leaiuto} 
	Let $\lambda \in (\mathbf{a},0)$ be such that $R_\lambda (\Gamma) \cap {\overline {\Omega}} = \emptyset$ and consider the functions
	$$\varphi_{1,\lambda}\,:= \begin{cases}\, e^{-u} w_\lambda^+ \phi_{\tau}^2  & \text{in}\quad\Omega_\lambda \\
		0 &  \text{in}\quad \R^n\setminus \Omega_\lambda
	\end{cases} \quad \text{and} \quad \varphi_{2,\lambda}\,:= \begin{cases}\, e^{-u_\lambda} w_\lambda^+ \phi_{\tau}^2  & \text{in}\quad\Omega_\lambda \\
	0 &  \text{in}\quad \R^n\setminus \Omega_\lambda,
\end{cases}$$
	where $\phi_{\tau}$ is as in \eqref{test2}. Then $\varphi_{1,\lambda}, \, \varphi_{2,\lambda} \in C^{0,1}_c (\Omega) \cap C^{0,1}_c (R_{\lambda}(\Omega))$ have compact support contained in $(\Omega \setminus \Gamma) \cap (R_{\lambda}(\Omega) \setminus R_{\lambda}(\Gamma)) \cap \{ x_1 \le \lambda \}$ and
	\begin{equation}\label{gradvarphi1}
		\begin{split}	
		\nabla \varphi_{1,\lambda} &= e^{-u} \left[-w_\lambda^+ \phi_\tau^2 \nabla u + \phi_\tau^2 \nabla w_{i,\lambda}^+  + 2 \phi_\tau
		w_{i,\lambda}^+ \nabla \phi_\tau \right]\quad \ \ {\text {a.e. on }} \, \, \Omega \cup R_{\lambda}(\Omega),\\	
		\nabla \varphi_{2,\lambda} &= e^{-u_\lambda} \left[ -w_\lambda^+ \phi_\tau^2 \nabla u_\lambda + \phi_\tau^2 \nabla w_{i,\lambda}^+  + 2 \phi_\tau
		w_{i,\lambda}^+ \nabla \phi_\tau\right] \quad {\text {a.e. on }} \, \, \Omega \cup 		R_{\lambda}(\Omega).
	    \end{split}
    \end{equation}

	If $\lambda \in (\mathbf{a},0)$ is such that $R_\lambda (\Gamma) \cap {\overline {\Omega}} \neq \emptyset$, we achieve the same conclusions for the functions
	$$\varphi_{1,\lambda}\,:= \begin{cases}\, e^{-u} w_\lambda^+ \psi_\varepsilon^2 \phi_{\tau}^2  & \text{in}\quad\Omega_\lambda \\
	0 &  \text{in}\quad \R^n\setminus \Omega_\lambda
	\end{cases} \quad \text{and} \quad \varphi_{2,\lambda}\,:= \begin{cases}\, e^{-u_\lambda} w_\lambda^+ \psi_\varepsilon^2 \phi_{\tau}^2  & \text{in}\quad\Omega_\lambda \\
	0 &  \text{in}\quad \R^n\setminus \Omega_\lambda,
	\end{cases}$$
	where $\psi_\varepsilon$ is defined as in \eqref{test1} and $\phi_{\tau}$
	as in \eqref{test2}. Furthermore, 
	\begin{equation}\label{gradvarphi2}
		\begin{split}	
			\nabla \varphi_{1,\lambda} &= e^{-u} \left[-w_\lambda^+ \psi_\varepsilon^2 \phi_\tau^2 \nabla u + \psi_\varepsilon^2 \phi_\tau^2 \nabla w_{\lambda}^+  + 2 w_{\lambda}^+ \psi_\varepsilon \phi_\tau(  \phi_\tau \nabla  \psi_\varepsilon + \psi_\varepsilon \nabla \phi_\tau) \right],\\	
			\nabla \varphi_{2,\lambda} &= e^{-u_\lambda} \left[-w_\lambda^+ \psi_\varepsilon^2 \phi_\tau^2 \nabla u_\lambda + \psi_\varepsilon^2 \phi_\tau^2 \nabla w_{\lambda}^+  + 2 w_{\lambda}^+ \psi_\varepsilon \phi_\tau(  \phi_\tau \nabla  \psi_\varepsilon + \psi_\varepsilon \nabla \phi_\tau) \right],
		\end{split}
	\end{equation}
a.e.~on $\Omega \cup R_{\lambda}(\Omega)$.
\end{lem}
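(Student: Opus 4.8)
The plan is to reduce the lemma to two routine points: (i) the support of each $\varphi_{i,\lambda}$ is a compact subset of $(\Omega\setminus\Gamma)\cap(R_\lambda(\Omega)\setminus R_\lambda(\Gamma))\cap\{x_1\le\lambda\}$; and (ii) on such a compact set every factor entering $\varphi_{i,\lambda}$ is Lipschitz, so that the identities \eqref{gradvarphi1}--\eqref{gradvarphi2} drop out of the Leibniz rule for Lipschitz functions. I would treat the two cases $R_\lambda(\Gamma)\cap\overline\Omega=\emptyset$ and $R_\lambda(\Gamma)\cap\overline\Omega\neq\emptyset$ in parallel: the computations are identical, the only difference being the extra factor $\psi_\varepsilon^2$ in the second case.

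For (i), note first that $\varphi_{i,\lambda}\equiv 0$ off $\Omega_\lambda\subseteq\{x_1\le\lambda\}$, so it suffices to show that $\overline{\{\varphi_{i,\lambda}\neq 0\}}$ avoids $\partial\Omega$, $\Gamma$ and $R_\lambda(\Gamma)$. Since $\Gamma\subset\Pi_0$ and $\lambda<0$ we have $\Gamma\cap\overline\Omega_\lambda=\emptyset$, so $\Gamma$ is harmless; $R_\lambda(\Gamma)$ is killed by the factor $\psi_\varepsilon^2$, which vanishes on a neighbourhood of $R_\lambda(\Gamma)$ (and there is nothing to check when $R_\lambda(\Gamma)\cap\overline\Omega=\emptyset$). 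For the boundary I would use $w_\lambda$: on $\Pi_\lambda\cap\overline\Omega$ one has $w_\lambda=0$; on $\partial\Omega\cap\{x_1\le\lambda\}$ one has $u\equiv 0$ and $u_\lambda\ge 0$, hence $w_\lambda\le 0$, hence $w_\lambda^+=0$; and away from the corner $\gamma_\lambda$ this holds with strict sign, since for $x\in\partial\Omega$ with $x_1<\lambda$ the convexity and the $\Pi_0$-symmetry of $\Omega$ force $R_\lambda(x)$ into $\Omega$ (it lies on the chord joining $x$ to its $\Pi_0$-reflection), so $u_\lambda=u\circ R_\lambda$ is bounded below by a positive constant on $\partial\Omega\cap\{x_1\le\lambda-\delta\}$ for every $\delta>0$, whence $w_\lambda<0$ there. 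Choosing $\delta$ so that $\{x\in\partial\Omega:x_1>\lambda-\delta\}$ lies in the region where $\phi_\tau\equiv 0$, I obtain that $w_\lambda^+\phi_\tau^2$, and a fortiori $\varphi_{i,\lambda}$, vanishes on a relative neighbourhood in $\overline\Omega_\lambda$ of the whole of $\partial\Omega\cap\{x_1\le\lambda\}$. Thus $\overline{\{\varphi_{i,\lambda}\neq 0\}}$ is closed, bounded, contained in $\Omega\setminus\Gamma$ and in $\{x_1\le\lambda\}$; since $\Omega_\lambda\subseteq R_\lambda(\Omega)$ by \eqref{eq.inclusionOmegalambda} and it avoids $R_\lambda(\Gamma)$, it is a compact subset of $R_\lambda(\Omega)\setminus R_\lambda(\Gamma)$ as well.

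For (ii), set $K:=\overline{\{\varphi_{i,\lambda}\neq 0\}}$; by (i) both $K$ and $R_\lambda(K)$ are compact subsets of $\Omega\setminus\Gamma$, so the interior regularity $u\in C^{1,\alpha}_{loc}(\Omega\setminus\Gamma)$ recalled in the Introduction gives $u,u_\lambda\in C^{1,\alpha}(K)$, and hence $w_\lambda,e^{-u},e^{-u_\lambda}\in C^{1,\alpha}(K)$ with $\nabla e^{-u}=-e^{-u}\nabla u$, $\nabla e^{-u_\lambda}=-e^{-u_\lambda}\nabla u_\lambda$. Then $w_\lambda^+$ is Lipschitz on $K$ with $\nabla w_\lambda^+=\chi_{\{w_\lambda>0\}}\nabla w_\lambda$ a.e., while $\phi_\tau,\psi_\varepsilon$ are globally Lipschitz on $\R^n$ with $\nabla(\phi_\tau^2)=2\phi_\tau\nabla\phi_\tau$, $\nabla(\psi_\varepsilon^2)=2\psi_\varepsilon\nabla\psi_\varepsilon$; being a product of compactly supported Lipschitz functions, $\varphi_{i,\lambda}$ is Lipschitz on $K$, and its extension by $0$ across $\Pi_\lambda$ stays Lipschitz because $w_\lambda\in C^1$ vanishes on $\Pi_\lambda$, so $0\le w_\lambda^+\le C\,\mathrm{dist}(\cdot,\Pi_\lambda)$ near $\Pi_\lambda$. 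This gives $\varphi_{i,\lambda}\in C^{0,1}_c(\Omega)\cap C^{0,1}_c(R_\lambda(\Omega))$ with the claimed support. The gradient formulas then follow by expanding $\nabla(e^{-u}w_\lambda^+\phi_\tau^2)$ (resp.\ $\nabla(e^{-u}w_\lambda^+\psi_\varepsilon^2\phi_\tau^2)$, and likewise with $u_\lambda$) via the Leibniz rule and inserting the three elementary identities above; the outcome is exactly \eqref{gradvarphi1} and \eqref{gradvarphi2}, understood a.e.\ on $\Omega\cup R_\lambda(\Omega)$ with the convention that every term carrying the factor $w_\lambda^+$ is read as $0$ on $\{w_\lambda\le 0\}$, so that the possible unboundedness of $\nabla u,\nabla u_\lambda$ near $\partial\Omega$ is immaterial.

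The only genuinely delicate step is (i) near $\partial\Omega$: since $u$ is merely continuous — and $\nabla u$ in general unbounded — up to $\partial\Omega$, one cannot argue by regularity there and is forced to excise a neighbourhood of the corner $\gamma_\lambda=\partial\Omega\cap\Pi_\lambda$ by the cut-off $\phi_\tau$, the rest of $\partial\Omega\cap\{x_1\le\lambda\}$ being controlled by the strict negativity of $w_\lambda$. Compared with the corresponding lemma in \cite{EFS}, the sole new ingredient is the presence of the factors $e^{-u}$, $e^{-u_\lambda}$; they are smooth on $K$ and commute with the rest of the computation, and are chosen precisely so as to absorb the first order term $|\nabla u|^2$ once $\varphi_{i,\lambda}$ is inserted in \eqref{debil1}--\eqref{debil2}.
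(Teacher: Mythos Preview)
Your proposal is correct and coincides with the paper's approach: the paper omits the proof entirely and defers verbatim to Lemma~3.1 of \cite{EFS}, and what you have written is precisely a reconstruction of that argument. The only novelty relative to \cite{EFS} is the presence of the smooth bounded factors $e^{-u}$ and $e^{-u_\lambda}$, which, as you note, do not affect the support analysis and enter the gradient identities only through the chain rule.
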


We omit the proof of this lemma, since it repeats verbatim the arguments given in the proof of Lemma 3.1 of \cite{EFS}.

\section{The moving plane method for singular solutions}\label{mainproofsec}

In this section we  adapt the moving plane technique in the same spirit of \cite{EFS} by a careful choice of the cut-off functions defined in Lemma \ref{leaiuto}. The first step needed to prove Theorem \ref{main} is to show that $w_\lambda^+ \in H^1_0(\Omega_\lambda)$. In particular, we give the following:
\begin{lem}\label{leaiuto2}
Under the assumptions of Theorem \ref{main}, let $\mathbf{a}<\lambda<0$. Then 
\begin{equation}\label{eq:H10test}
\int_{\Omega_\lambda}|\nabla w_\lambda^+|^2\,dx\leq \mathbf{C}(f,{\vert \Omega \vert}, \|u\|_{L^\infty(\Omega_\lambda)}),
\end{equation}
{where $\vert \Omega \vert$ denotes the $n-$dimensional Lebesgue measure of $ \Omega$. Consequently, $ w_\lambda^+\in H^1_0(\Omega_\lambda)$.}
\end{lem}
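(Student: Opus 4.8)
The plan is to test the two weak formulations \eqref{debil1} and \eqref{debil2} against the cut-off functions $\varphi_{1,\lambda}$ and $\varphi_{2,\lambda}$ built in Lemma \ref{leaiuto} (the version carrying the extra factor $\psi_\varepsilon^2$ when $R_\lambda(\Gamma)\cap\overline{\Omega}\neq\emptyset$, the simpler one otherwise), to subtract the resulting identities, and to let the regularizing parameters $\tau$ (and $\varepsilon$) tend to $0$ at the end. The decisive algebraic point is that the exponential weight is the integrating factor of the first order term: by \eqref{gradvarphi1}, the term $-e^{-u}w_\lambda^+\phi_\tau^2|\nabla u|^2$ produced by $\langle\nabla u,\nabla\varphi_{1,\lambda}\rangle$ cancels exactly the contribution $|\nabla u|^2\varphi_{1,\lambda}$ of the first order term in \eqref{debil1}, so that testing \eqref{debil1} with $\varphi_{1,\lambda}$ yields
\[
\int_{\Omega_\lambda} e^{-u}\langle\nabla u,\nabla(w_\lambda^+\phi_\tau^2)\rangle\,dx=\int_{\Omega_\lambda} e^{-u}\Big(\frac{1}{u^\gamma}+f(x,u)\Big)w_\lambda^+\phi_\tau^2\,dx,
\]
namely the divergence-form identity $-\operatorname{div}(e^{-u}\nabla u)=e^{-u}(u^{-\gamma}+f(x,u))$ tested against $w_\lambda^+\phi_\tau^2$; analogously for $u_\lambda$ with the weight $e^{-u_\lambda}$ and \eqref{debil2}. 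Expanding $\nabla(w_\lambda^+\phi_\tau^2)=\phi_\tau^2\nabla w_\lambda^+ +2\phi_\tau w_\lambda^+\nabla\phi_\tau$, subtracting, and using that $\nabla w_\lambda^+=\nabla u-\nabla u_\lambda$ a.e. on $\{w_\lambda^+>0\}$ and vanishes elsewhere, the principal part of the difference becomes $\int_{\Omega_\lambda} e^{-u}\phi_\tau^2|\nabla w_\lambda^+|^2\,dx +\int_{\Omega_\lambda}\phi_\tau^2(e^{-u}-e^{-u_\lambda})\langle\nabla u_\lambda,\nabla w_\lambda^+\rangle\,dx$.

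I would then extract the two signs the structure offers. On $\{w_\lambda^+>0\}$ one has $0<u_\lambda<u$, whence $e^{-u}\ge e^{-\|u\|_{L^\infty(\Omega_\lambda)}}$; and since $t\mapsto e^{-t}t^{-\gamma}$ is strictly decreasing on $(0,+\infty)$, the singular contribution $\int_{\Omega_\lambda}\big(e^{-u}u^{-\gamma}-e^{-u_\lambda}u_\lambda^{-\gamma}\big)w_\lambda^+\phi_\tau^2\,dx$ is $\le 0$ and is discarded — this is the only occurrence of the singular nonlinearity and it is essential that it enters with the favourable sign. For the reaction term I would write $e^{-u}f(x,u)-e^{-u_\lambda}f(x_\lambda,u_\lambda)=e^{-u}\big(f(x,u)-f(x_\lambda,u_\lambda)\big)+(e^{-u}-e^{-u_\lambda})f(x_\lambda,u_\lambda)$ and use $(\mathbf{hp}_f)$: monotonicity in the $x_1$-direction and symmetry in $\Pi_0$ give $f(x,u_\lambda)\le f(x_\lambda,u_\lambda)$ for $x\in\Omega_\lambda$, $\lambda<0$, while the local Lipschitz bound gives $|f(x,u)-f(x,u_\lambda)|\le\mathcal{L}_f(\|u\|_{L^\infty(\Omega_\lambda)},\overline{\Omega}_\lambda)\,w_\lambda^+$; together with $|e^{-u}-e^{-u_\lambda}|\le w_\lambda^+$, with $0\le w_\lambda^+\le u\le\|u\|_{L^\infty(\Omega_\lambda)}$ and with $|\Omega_\lambda|\le|\Omega|$, the whole reaction term is bounded by a constant of the form $\mathbf{C}(f,|\Omega|,\|u\|_{L^\infty(\Omega_\lambda)})$.

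It then remains to dispose of the gradient cross term $\int_{\Omega_\lambda}\phi_\tau^2(e^{-u}-e^{-u_\lambda})\langle\nabla u_\lambda,\nabla w_\lambda^+\rangle\,dx$ and of the terms carrying $\nabla\phi_\tau$ (and $\nabla\psi_\varepsilon$). For the cross term I would again use $|e^{-u}-e^{-u_\lambda}|\le w_\lambda^+\le u$ and Young's inequality to absorb part of $\int_{\Omega_\lambda} e^{-u}\phi_\tau^2|\nabla w_\lambda^+|^2\,dx$ into the left-hand side, the remainder being controlled on the support of $\varphi_{1,\lambda}$, which by Lemma \ref{leaiuto} is a compact subset of $(\Omega\setminus\Gamma)\cap(R_\lambda(\Omega)\setminus R_\lambda(\Gamma))$, where $u,u_\lambda\in C^{1,\alpha}$. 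The cut-off terms are treated the same way, their leftover being estimated by $\int_{\R^n}|\nabla\phi_\tau|^2\,dx\le\Upsilon_2\tau$ and $\int_{\R^n}|\nabla\psi_\varepsilon|^2\,dx\le\Upsilon_1\varepsilon$, hence vanishing as $\tau,\varepsilon\to0$. Letting $\tau\to0$ (and then $\varepsilon\to0$), with Fatou's lemma on the left and dominated convergence on the right, gives \eqref{eq:H10test}. Since then $w_\lambda^+\in L^\infty(\Omega_\lambda)$ and $\nabla w_\lambda^+\in L^2(\Omega_\lambda)$, we have $w_\lambda^+\in H^1(\Omega_\lambda)$; moreover $\phi_\tau w_\lambda^+$ (resp.\ $\psi_\varepsilon\phi_\tau w_\lambda^+$) lies in $H^1_0(\Omega_\lambda)$, since it has compact support away from $\gamma_\lambda$ and $R_\lambda(\Gamma)$ and vanishes on the remaining part of $\partial\Omega_\lambda$ (there $w_\lambda^+=(u-u_\lambda)^+=0$), and it converges to $w_\lambda^+$ in $H^1(\Omega_\lambda)$ by \eqref{eq:H10test} and the capacity bounds; being $H^1_0(\Omega_\lambda)$ closed, $w_\lambda^+\in H^1_0(\Omega_\lambda)$.

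The step I expect to be the real obstacle is this last control of the gradient cross term and of the cut-off terms near the edge $\gamma_\lambda=\partial\Omega\cap\Pi_\lambda$ (and, when present, near $R_\lambda(\Gamma)$): neither $\nabla u$ nor $\nabla u_\lambda$ is bounded up to $\partial\Omega_\lambda$, so plain boundedness is not enough. One must use crucially that $w_\lambda^+\le u$ is small exactly where these gradients degenerate, that $w_\lambda^+\equiv0$ in a neighborhood of $R_\lambda(\Gamma)$ (since $u_\lambda\to+\infty$ there while $u$ stays bounded), and the quantitative smallness $\Upsilon_2\tau$, $\Upsilon_1\varepsilon$ of the cut-offs, so that these contributions neither survive the limit nor corrupt the dependence of the final constant — arranging the Young's-inequality estimates so that the constant depends only on $f$, $|\Omega|$ and $\|u\|_{L^\infty(\Omega_\lambda)}$ is the delicate bookkeeping.
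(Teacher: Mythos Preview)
Your proposal follows essentially the same route as the paper: test \eqref{debil1} and \eqref{debil2} with $\varphi_{1,\lambda}$, $\varphi_{2,\lambda}$ so that the exponential weight kills the first-order term, subtract, discard the singular contribution via the monotonicity of $t\mapsto e^{-t}t^{-\gamma}$, absorb the cross and cut-off terms by Young's inequality, and pass to the limit with Fatou and the capacity bounds $\int|\nabla\psi_\varepsilon|^2\le\Upsilon_1\varepsilon$, $\int|\nabla\phi_\tau|^2\le\Upsilon_2\tau$. Two cosmetic differences: the paper isolates $\int_{\Omega_\lambda}e^{-u_\lambda}|\nabla w_\lambda^+|^2\psi_\varepsilon^2\phi_\tau^2\,dx$ (weight $e^{-u_\lambda}$, not $e^{-u}$) as the principal term, by adding and subtracting $\int_{\Omega_\lambda}e^{-u_\lambda}\langle\nabla u,\nabla w_\lambda^+\rangle\psi_\varepsilon^2\phi_\tau^2\,dx$ on the left; and it handles the reaction term in one stroke by observing that $g(x,t):=e^{-t}f(x,t)$ itself satisfies $(\mathbf{hp}_f)$, rather than splitting as you do.

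Your last paragraph, however, raises concerns the paper does not share, and the fixes you propose are not the ones actually used. The paper never invokes any compensation of the type ``$w_\lambda^+$ is small where $|\nabla u|$ degenerates'': it simply sets $\mathcal{C}_1:=\mathcal{L}_{\exp}\cdot\|\nabla u\|_{L^\infty(\Omega_\lambda)}$ and carries this constant through every Young estimate into the final bound (so the constant in \eqref{eq:H10test} implicitly depends on $\|\nabla u\|_{L^\infty(\Omega_\lambda)}$ as well, via $\bar{\mathcal C}$). More importantly, your side claim that $w_\lambda^+\equiv 0$ in a neighborhood of $R_\lambda(\Gamma)$ because ``$u_\lambda\to+\infty$ there'' is neither needed nor justified: nothing in the hypotheses forces $u$ to blow up on $\Gamma$. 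The neighborhood of $R_\lambda(\Gamma)$ is handled purely by the cut-off $\psi_\varepsilon$ (Lemma \ref{leaiuto} already guarantees the test functions are compactly supported in $(\Omega\setminus\Gamma)\cap(R_\lambda(\Omega)\setminus R_\lambda(\Gamma))$), and its contribution disappears through the capacity estimate, not through any pointwise vanishing of $w_\lambda^+$.
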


\begin{proof}

For $\psi_\varepsilon$ as in \eqref{test1} and $\phi_{\tau}$
as in \eqref{test2}, we consider the functions $\varphi_{1,\lambda}$ and $\varphi_{2,\lambda}$ defined in Lemma \ref{leaiuto}. According to the properties of $\varphi_{1,\lambda}$ and $\varphi_{2,\lambda}$ stated in Lemma \ref{leaiuto}, and a standard density argument, we can use $\varphi_{1,\lambda}$  as test function in \eqref{debil1} and $\varphi_{2,\lambda}$ in \eqref{debil2} so that we get
\begin{equation}\nonumber
	\begin{split}
	-\int_{\Omega_{\lambda}} e^{-u}  |\nabla u|^2 w_\lambda^+ \psi_\varepsilon^2 \phi_{\tau}^2 \, dx &+ \int_{\Omega_{\lambda}} e^{-u} \langle \nabla u, \nabla w_\lambda^+ \rangle  \psi_\varepsilon^2 \phi_{\tau}^2 \, dx +  2 \int_{\Omega_{\lambda}} e^{-u} \langle \nabla u, \nabla \psi_\varepsilon \rangle   w_\lambda^+ \psi_\varepsilon \phi_{\tau}^2 \, dx\\
	&+ 2 \int_{\Omega_{\lambda}} e^{-u} \langle \nabla u, \nabla \phi_\tau \rangle   w_\lambda^+ \psi_\varepsilon^2 \phi_{\tau} \, dx + \int_{\Omega_{\lambda}} e^{-u}  |\nabla u|^2 w_\lambda^+ \psi_\varepsilon^2 \phi_{\tau}^2 \, dx\\
	=& \int_{\Omega_{\lambda}} \frac{e^{-u}}{u^\gamma}   w_\lambda^+ \psi_\varepsilon^2 \phi_{\tau}^2 \, dx + \int_{\Omega_{\lambda}} e^{-u} f(x,u)   w_\lambda^+ \psi_\varepsilon^2 \phi_{\tau}^2 \, dx.
	\end{split}
\end{equation}
Hence, we have
\begin{equation}\label{eq:substtest1}
	\begin{split}
		\int_{\Omega_{\lambda}} e^{-u} \langle \nabla u, \nabla w_\lambda^+ \rangle  \psi_\varepsilon^2 \phi_{\tau}^2 \, dx &+  2 \int_{\Omega_{\lambda}} e^{-u} \langle \nabla u, \nabla \psi_\varepsilon \rangle   w_\lambda^+ \psi_\varepsilon \phi_{\tau}^2 \, dx \\
		&+ 2 \int_{\Omega_{\lambda}} e^{-u} \langle \nabla u, \nabla \phi_\tau \rangle   w_\lambda^+ \psi_\varepsilon^2 \phi_{\tau} \, dx \\
		=& \int_{\Omega_{\lambda}} \frac{e^{-u}}{u^\gamma}   w_\lambda^+ \psi_\varepsilon^2 \phi_{\tau}^2 \, dx + \int_{\Omega_{\lambda}} e^{-u} f(x,u)   w_\lambda^+ \psi_\varepsilon^2 \phi_{\tau}^2 \, dx.
	\end{split}
\end{equation}
Now, as remarked above, by Lemma \ref{leaiuto} and a standard density argument, we can use $\varphi_{2,\lambda}$ in \eqref{debil2} so that we obtain
\begin{equation}\nonumber
	\begin{split}
		-\int_{\Omega_{\lambda}} e^{-u_\lambda}  |\nabla u_\lambda|^2 w_\lambda^+ \psi_\varepsilon^2 \phi_{\tau}^2 \, dx &+ \int_{\Omega_{\lambda}} e^{-u_\lambda} \langle \nabla u_\lambda, \nabla w_\lambda^+ \rangle  \psi_\varepsilon^2 \phi_{\tau}^2 \, dx \\
		&+  2 \int_{\Omega_{\lambda}} e^{-u_\lambda} \langle \nabla u_\lambda, \nabla \psi_\varepsilon \rangle   w_\lambda^+ \psi_\varepsilon \phi_{\tau}^2 \, dx\\
		&+ 2 \int_{\Omega_{\lambda}} e^{-u_\lambda} \langle \nabla u_\lambda, \nabla \phi_\tau \rangle   w_\lambda^+ \psi_\varepsilon^2 \phi_{\tau} \, dx \\
		&+ \int_{\Omega_{\lambda}} e^{-u_\lambda}  |\nabla u_\lambda|^2 w_\lambda^+ \psi_\varepsilon^2 \phi_{\tau}^2 \, dx\\
		=& \int_{\Omega_{\lambda}} \frac{e^{-u_\lambda}}{u_\lambda^\gamma}   w_\lambda^+ \psi_\varepsilon^2 \phi_{\tau}^2 \, dx + \int_{\Omega_{\lambda}} e^{-u_\lambda} f(x_\lambda,u_\lambda)   w_\lambda^+ \psi_\varepsilon^2 \phi_{\tau}^2 \, dx.
	\end{split}
\end{equation}
Hence, we have
\begin{equation}\label{eq:substtest2}
\begin{split}
	\int_{\Omega_{\lambda}} e^{-u_\lambda} \langle \nabla u_\lambda, \nabla w_\lambda^+ \rangle  \psi_\varepsilon^2 \phi_{\tau}^2 \, dx &+  2 \int_{\Omega_{\lambda}} e^{-u_\lambda} \langle \nabla u_\lambda, \nabla \psi_\varepsilon \rangle   w_\lambda^+ \psi_\varepsilon \phi_{\tau}^2 \, dx\\
	& + 2 \int_{\Omega_{\lambda}} e^{-u_\lambda} \langle \nabla u_\lambda, \nabla \phi_\tau \rangle   w_\lambda^+ \psi_\varepsilon^2 \phi_{\tau} \, dx \\
	=& \int_{\Omega_{\lambda}} \frac{e^{-u_\lambda}}{u_\lambda^\gamma}   w_\lambda^+ \psi_\varepsilon^2 \phi_{\tau}^2 \, dx + \int_{\Omega_{\lambda}} e^{-u_\lambda} f(x_\lambda,u_\lambda)   w_\lambda^+ \psi_\varepsilon^2 \phi_{\tau}^2 \, dx.
\end{split}
\end{equation}
Subtracting \eqref{eq:substtest1} and \eqref{eq:substtest2} we obtain
\begin{equation}\label{eq:esitmatesubtracting1}
\begin{split}
	\int_{\Omega_{\lambda}} &e^{-u} \langle \nabla u, \nabla w_\lambda^+ \rangle  \psi_\varepsilon^2 \phi_{\tau}^2 \, dx  - \int_{\Omega_{\lambda}} e^{-u_\lambda} \langle \nabla u_\lambda, \nabla w_\lambda^+ \rangle  \psi_\varepsilon^2 \phi_{\tau}^2 \, dx\\
	=&-  2 \int_{\Omega_{\lambda}} e^{-u} \langle \nabla u, \nabla \psi_\varepsilon \rangle   w_\lambda^+ \psi_\varepsilon \phi_{\tau}^2 \, dx +   2 \int_{\Omega_{\lambda}} e^{-u_\lambda} \langle \nabla u_\lambda, \nabla \psi_\varepsilon \rangle   w_\lambda^+ \psi_\varepsilon \phi_{\tau}^2 \, dx\\
	&-2 \int_{\Omega_{\lambda}} e^{-u} \langle \nabla u, \nabla \phi_\tau \rangle   w_\lambda^+ \psi_\varepsilon^2 \phi_{\tau} \, dx + 2 \int_{\Omega_{\lambda}} e^{-u_\lambda} \langle \nabla u_\lambda, \nabla \phi_\tau \rangle   w_\lambda^+ \psi_\varepsilon^2 \phi_{\tau} \, dx\\
	&+\int_{\Omega_{\lambda}} \left(\frac{e^{-u}}{u^\gamma} - \frac{e^{-u_\lambda}}{u_\lambda^\gamma}\right)  w_\lambda^+ \psi_\varepsilon^2 \phi_{\tau}^2 \, dx + \int_{\Omega_{\lambda}} (e^{-u}f(x,u)-e^{-u_\lambda} f(x_\lambda,u_\lambda))   w_\lambda^+ \psi_\varepsilon^2 \phi_{\tau}^2 \, dx.
\end{split}
\end{equation}
Now, adding to both side of \eqref{eq:esitmatesubtracting1} the quantity $\displaystyle \int_{\Omega_{\lambda}} e^{-u_\lambda} \langle \nabla u, \nabla w_\lambda^+ \rangle  \psi_\varepsilon^2 \phi_{\tau}^2 \, dx$, adding and subtracting to the right hand side of \eqref{eq:esitmatesubtracting1} the quantities $\ \displaystyle 2 \int_{\Omega_{\lambda}} e^{-u_\lambda} \langle \nabla u, \nabla \psi_\varepsilon \rangle  w_\lambda^+ \psi_\varepsilon \phi_{\tau}^2 \, dx\,$ and  $\displaystyle 2 \int_{\Omega_{\lambda}} e^{-u_\lambda} \langle \nabla u, \nabla  \phi_{\tau} \rangle  w_\lambda^+ \psi_\varepsilon^2 \phi_{\tau} \, dx,$ applying the Schwarz inequality and using that the function $\displaystyle t \mapsto \frac{e^{-t}}{t^\gamma}$ is decreasing, we obtain 
\begin{equation}\label{stima1}
\begin{split}
\int_{\Omega_\lambda} e^{-u_\lambda}|\nabla w_\lambda^+|^2\psi_\varepsilon^2
\phi_{\tau}^2 \,dx \leq& \int_{\Omega_{\lambda}} |e^{-u} - e^{-u_\lambda}| \cdot |\nabla u| \cdot |\nabla w_\lambda^+|  \ \psi_\varepsilon^2 \phi_{\tau}^2 \, dx\\
&+ 2 \int_{\Omega_{\lambda}} e^{-u_\lambda} |\nabla w_\lambda^+| \cdot  |\nabla \psi_\varepsilon| \ w_\lambda^+ \psi_\varepsilon \phi_{\tau}^2 \, dx\\
&+2 \int_{\Omega_{\lambda}} e^{-u_\lambda} |\nabla w_\lambda^+| \cdot   |\nabla \phi_\tau| \   w_\lambda^+ \psi_\varepsilon^2 \phi_{\tau} \, dx\\
&+ 2 \int_{\Omega_{\lambda}} |e^{-u} - e^{-u_\lambda}| \cdot  |\nabla u| \cdot  |\nabla \psi_\varepsilon| \ w_\lambda^+ \psi_\varepsilon \phi_{\tau}^2 \, dx\\
&+ 2 \int_{\Omega_{\lambda}} |e^{-u} - e^{-u_\lambda}| \cdot  |\nabla u| \cdot  |\nabla \phi_\tau| \ w_\lambda^+ \psi_\varepsilon^2 \phi_{\tau} \, dx\\
&+ \int_{\Omega_{\lambda}} (e^{-u}f(x,u)-e^{-u_\lambda} f(x_\lambda,u_\lambda)) \  w_\lambda^+ \psi_\varepsilon^2 \phi_{\tau}^2 \, dx \\
=&:\mathcal{I}_1+\mathcal{I}_2+\mathcal{I}_3+\mathcal{I}_4+\mathcal{I}_5+\mathcal{I}_6.
\end{split}
\end{equation}
Our aim is to estimate the right hand side of \eqref{stima1}. We start with the first term:

\begin{equation}\label{I_1}
	\begin{split}
		\mathcal{I}_1&=\int_{\Omega_{\lambda}} |e^{-u} - e^{-u_\lambda}| \cdot |\nabla u| \cdot |\nabla w_\lambda^+|  \ \psi_\varepsilon^2 \phi_{\tau}^2 \, dx\\
		& \leq \mathcal{L}_{\text{exp}}  \int_{\Omega_\lambda} |\nabla u| \cdot |\nabla w_\lambda^+| \ w_\lambda^+ \psi_\varepsilon^2 \phi_{\tau}^2 \, dx\\
		& \leq \mathcal{C}_1 \int_{\Omega_\lambda} |\nabla w_\lambda^+| \  w_\lambda^+ \psi_\varepsilon^2 \phi_{\tau}^2 \, dx,
	\end{split}
\end{equation}
where $\mathcal{C}_1:=\mathcal{L}_{\text{exp}}  \cdot \| \nabla u\|_{L^\infty(\Omega_\lambda)}$ and $\mathcal{L}_{\text{exp}}$ is the Lipschitz constant of the function $t \mapsto e^{-t}$. Hence,  applying the weighted Young's inequality  in \eqref{I_1} we obtain
\begin{equation}\label{I_1bis}
	\begin{split}
		\mathcal{I}_1 &  \leq \mathcal{C}_1 \int_{\Omega_\lambda}  |\nabla w_\lambda^+| \ \psi_\varepsilon \phi_{\tau} \cdot   w_\lambda^+ \psi_\varepsilon \phi_{\tau} \, dx\\
		& \leq \delta_1 \int_{\Omega_\lambda} |\nabla w_\lambda|^2 \psi_\varepsilon^2 \phi_{\tau}^2 \, dx +\mathcal{C}_{\delta_1} \int_{\Omega_\lambda} (w_\lambda^+)^2 \psi_\varepsilon^2 \phi_{\tau}^2 \, dx,
	\end{split}
\end{equation}
where $\delta_1>0$ and $\mathcal{C}_{\delta_1}:=\frac{\mathcal{C}_1^2}{4 \delta_1}$.

In the second term we apply again the weighted Young's inequality in order to get

\begin{equation} \label{I_2}
	\begin{split}
		\mathcal{I}_2 &= 2 \int_{\Omega_{\lambda}} e^{-u_\lambda} |\nabla w_\lambda^+| \cdot  |\nabla \psi_\varepsilon| \ w_\lambda^+ \psi_\varepsilon \phi_{\tau}^2 \, dx\\
		&= 2 \int_{\Omega_{\lambda}} e^{-\frac{u_\lambda}{2}} |\nabla w_\lambda^+| \  \psi_\varepsilon \phi_{\tau} \cdot  e^{-\frac{u_\lambda}{2}} |\nabla \psi_\varepsilon| \  w_\lambda^+ \phi_{\tau} \, dx\\
		&\leq \delta_2 \int_{\Omega_{\lambda}} e^{-u_\lambda} |\nabla w_\lambda^+|^2  \psi_\varepsilon^2 \phi_{\tau}^2 \, dx + \mathcal{C}_{\delta_2}\int_{\Omega_{\lambda}}  e^{-u_\lambda} |\nabla \psi_\varepsilon|^2 \ (w_\lambda^+)^2 \phi_{\tau}^2 \, dx,\\
	\end{split}
\end{equation}
where $\delta_2>0$ and $\mathcal{C}_{\delta_2}:=\frac{\mathcal{C}_1^2}{\delta_2}$.

Repeating the same arguments used for $\mathcal{I}_2$, we deduce an analogous estimation for $\mathcal{I}_3$, i.e.

\begin{equation} \label{I_3}
	\begin{split}
		\mathcal{I}_3 \leq \delta_3 \int_{\Omega_{\lambda}} e^{-u_\lambda} |\nabla w_\lambda^+|^2  \psi_\varepsilon^2 \phi_{\tau}^2 \, dx + \mathcal{C}_{\delta_3}\int_{\Omega_{\lambda}}  e^{-u_\lambda} |\nabla \phi_\tau|^2 \ (w_\lambda^+)^2 \psi_\varepsilon^2 \, dx,\\
	\end{split}
\end{equation}
where $\delta_3>0$ and $\mathcal{C}_{\delta_3}:=\frac{\mathcal{C}_1^2}{\delta_3}$.

In the fourth term, once we use that $|\nabla u| \in L^\infty(\Omega_\lambda)$, we obtain

\begin{equation} \label{I_4}
\begin{split}
	\mathcal{I}_4&=2 \int_{\Omega_{\lambda}} |e^{-u} - e^{-u_\lambda}| \cdot  |\nabla u| \cdot  |\nabla \psi_\varepsilon| \ w_\lambda^+ \psi_\varepsilon \phi_{\tau}^2 \, dx\\
	&\leq 2\mathcal{C}_1 \int_{\Omega_{\lambda}}  |\nabla \psi_\varepsilon| \ w_\lambda^+ \phi_{\tau} \cdot w_\lambda^+ \psi_\varepsilon \phi_{\tau} \, dx,
\end{split}
\end{equation}
where we recall that $\mathcal{C}_1=\mathcal{L}_{\text{exp}}  \cdot \| \nabla u\|_{L^\infty(\Omega_\lambda)}$. Using the classical Young's inequality in \eqref{I_4} we deduce
\begin{equation}\label{I_4bis}
	\begin{split}
		\mathcal{I}_4 \leq  \int_{\Omega_\lambda}|\nabla \psi_\varepsilon|^2 (w_\lambda^+)^2		\phi_{\tau}^2
		\,dx + \mathcal{C}_1^2 \int_{\Omega_\lambda}(w_\lambda^+)^2 \psi_\varepsilon^2
		\phi_{\tau}^2\,dx.
	\end{split}
\end{equation}
Repeating the same arguments used for $\mathcal{I}_4$, we deduce an analogous estimation for $\mathcal{I}_5$, i.e.
\begin{equation}\label{I_5}
	\begin{split}
		\mathcal{I}_5 \leq  \int_{\Omega_\lambda}|\nabla \phi_{\tau}|^2 	(w_\lambda^+)^2	 \psi_\varepsilon^2 \,dx + \mathcal{C}_1^2 \int_{\Omega_\lambda}(w_\lambda^+)^2 \psi_\varepsilon^2	\phi_{\tau}^2\,dx.
	\end{split}
\end{equation}

Finally, it is easy to see that also the function $g(x,t):= e^{-t} f(x, t)$ fulfills $(\mathbf{hp}_f)$. Hence, using our assumption $(\mathbf{hp}_f)$ on the nonlinearity $g$, i.e. that $g(\cdot, s)$ is non-decreasing in the $x_1$-direction and  it is uniformly locally Lipschitz continuous far from the singular set, and choosing $\mathcal{K}= \overline{\Omega}_\lambda$ and $\Lambda=\|u\|_{L^\infty(\Omega_\lambda)}$ in Definition \ref{def:nonlinAssump}, we can get the estimate for the last term of  \eqref{stima1}
\begin{equation}\label{I_6}
\begin{split}
\mathcal{I}_6 \leq \int_{\Omega_\lambda} (e^{-u}f(x,u)-e^{-u_\lambda}f(x,u_\lambda)) \ w_\lambda^+ \psi_\varepsilon^2
\phi_{\tau}^2 \,dx
& \leq \mathcal{L}_g \int_{\Omega_\lambda}(w_\lambda^+)^2 \psi_\varepsilon^2
\phi_{\tau}^2\,dx,
\end{split}
\end{equation}
where $\mathcal{L}_g:=\mathcal{L}_g(\Omega_\lambda, \|u\|_{L^\infty(\Omega_\lambda)})$ is the Lipschitz constant of the function $g(x,t)= e^{-t} f(x, t)$.

Collecting \eqref{I_1bis}, \eqref{I_2}, \eqref{I_3}, \eqref{I_4bis}, \eqref{I_5} and \eqref{I_6}, by \eqref{stima1} we deduce that
\begin{equation}\label{stimaFinal}
	\begin{split}
		\int_{\Omega_\lambda} e^{-u_\lambda}|\nabla w_\lambda^+|^2\psi_\varepsilon^2
		\phi_{\tau}^2 \,dx  \leq& \, \delta_1 \int_{\Omega_\lambda} |\nabla w_\lambda|^2 \psi_\varepsilon^2 \phi_{\tau}^2 \, dx +\mathcal{C}_{\delta_1} \int_{\Omega_\lambda} (w_\lambda^+)^2 \psi_\varepsilon^2 \phi_{\tau}^2 \, dx\\
		&+ \delta_2 \int_{\Omega_{\lambda}} e^{-u_\lambda} |\nabla w_\lambda^+|^2  \psi_\varepsilon^2 \phi_{\tau}^2 \, dx + \mathcal{C}_{\delta_2}\int_{\Omega_{\lambda}}  e^{-u_\lambda} |\nabla \psi_\varepsilon|^2 \ (w_\lambda^+)^2 \phi_{\tau}^2 \, dx\\		
		&+\delta_3 \int_{\Omega_{\lambda}} e^{-u_\lambda} |\nabla w_\lambda^+|^2  \psi_\varepsilon^2 \phi_{\tau}^2 \, dx + \mathcal{C}_{\delta_3}\int_{\Omega_{\lambda}}  e^{-u_\lambda} |\nabla \phi_\tau|^2 \ (w_\lambda^+)^2 \psi_\varepsilon^2 \, dx\\
		&+ \int_{\Omega_\lambda}|\nabla \psi_\varepsilon|^2 (w_\lambda^+)^2		\phi_{\tau}^2
		\,dx + \mathcal{C}_1^2 \int_{\Omega_\lambda}(w_\lambda^+)^2 \psi_\varepsilon^2
		\phi_{\tau}^2\,dx	\\
		&+  \int_{\Omega_\lambda}|\nabla \phi_{\tau}|^2 	(w_\lambda^+)^2	 \psi_\varepsilon^2 \,dx + \mathcal{C}_1^2 \int_{\Omega_\lambda}(w_\lambda^+)^2 \psi_\varepsilon^2	\phi_{\tau}^2\,dx\\
		& + \mathcal{L}_g \int_{\Omega_\lambda}(w_\lambda^+)^2 \psi_\varepsilon^2
		\phi_{\tau}^2\,dx.
	\end{split}
\end{equation}
Let us observe that all the integrals in \eqref{stima1} and also in \eqref{stimaFinal} are computed in the support of the function $w_\lambda^+$, i.e. the set
$$\text{supp}(w_\lambda^+):=\{ x \in \Omega_{\lambda} \setminus R_\lambda (\Gamma) : u(x) > u_\lambda(x) >0 \} \subset {\overline\Omega}_\lambda \subset \overline{\Omega} \setminus \Gamma$$ 
and, hence, we can define $\Omega_\lambda^+ := \Omega_\lambda \cap \text{supp}(w_\lambda^+) \subset \overline{\Omega}_\lambda \subset \Omega \setminus \Gamma$. Having in mind that actually we are working in $\Omega_\lambda^+$, we remark that there exists a positive constant $\mathcal{C}_u$ such that $e^{-u_\lambda} \geq e^{-u} \geq \mathcal{C}_u>0$. Moreover, since $u_\lambda>0$ then $e^{-u_\lambda} <1$, and $\|w_\lambda^+\|_{L^\infty(\Omega_\lambda)} \leq \|u\|_{L^\infty(\Omega_\lambda)}$, we have
\begin{equation}\label{stimaFinalBis}
	\begin{split}
		\mathcal{C}_u \int_{\Omega_\lambda} |\nabla w_\lambda^+|^2\psi_\varepsilon^2
		\phi_{\tau}^2 \,dx  \leq& \, \delta \int_{\Omega_\lambda} |\nabla w_\lambda|^2 \psi_\varepsilon^2 \phi_{\tau}^2 \, dx + \mathcal{A}_{\delta_2} \int_{\Omega_\lambda}|\nabla \psi_\varepsilon|^2 
		\phi_{\tau}^2
		\,dx \\
		&+ \mathcal{A}_{\delta_3} \int_{\Omega_\lambda}|\nabla \phi_{\tau}|^2 	\psi_\varepsilon^2 \,dx +  \mathcal{\bar C} \int_{\Omega_\lambda}(w_\lambda^+)^2 \psi_\varepsilon^2
		\phi_{\tau}^2\,dx,
	\end{split}
\end{equation}
where $\delta:=\delta_1+\delta_2+\delta_3$, $\mathcal{A}_{\delta_2}:= \|u\|_{L^\infty(\Omega_\lambda)}^2(1+\mathcal{C}_{\delta_2})$, $\mathcal{A}_{\delta_3}:=\|u\|_{L^\infty(\Omega_\lambda)}^2(1+\mathcal{C}_{\delta_3})$,  $\mathcal{\bar C}:=\mathcal{C}_{\delta_1} + 2 \mathcal{C}_1^2 + \mathcal{L}_g$. Fixing $\delta>0$ sufficiently small such that $\mathcal{C}_u - \delta>0$ and recalling that $0 \leq \psi_\varepsilon \leq 1$ and $0 \leq \phi_\tau \leq 1$, we have
\begin{equation}\label{stimaFinalTris}
	\begin{split}
		\int_{\Omega_\lambda} |\nabla w_\lambda^+|^2\psi_\varepsilon^2
		\phi_{\tau}^2 \,dx  \leq& \, \frac{\mathcal{A}_{\delta}}{\mathcal{C}_u - \delta} \left( \int_{\Omega_\lambda}|\nabla \psi_\varepsilon|^2 \,dx 
		+ \int_{\Omega_\lambda}|\nabla \phi_{\tau}|^2  \,dx \right)\\
		& +  \frac{\mathcal{\bar C}}{\mathcal{C}_u - \delta} \int_{\Omega_\lambda}(w_\lambda^+)^2 \psi_\varepsilon^2
		\phi_{\tau}^2\,dx,
	\end{split}
\end{equation}
where $\mathcal{A}_{\delta}:=\max\{\mathcal{A}_{\delta_2}, \mathcal{A}_{\delta_3}\}$.
Taking into account the properties of
$\psi_\varepsilon$ and $\phi_\tau$, we have that
\begin{equation}\label{bbeg1}
\int_{\Omega_\lambda}|\nabla \psi_\varepsilon|^2\,dx=
\int_{\Omega_\lambda\cap(\mathcal B_\varepsilon^\lambda\setminus
\mathcal B_{\delta}^\lambda)}|\nabla \psi_\varepsilon|^2\,dx < \Upsilon_1
\varepsilon,
\end{equation}
\begin{equation}\label{bbeg2}
\int_{\Omega_\lambda}|\nabla \phi_\tau|^2\,dx=
\int_{\Omega_\lambda\cap(\mathcal I^\lambda_\tau \setminus \mathcal
I^\lambda_{\sigma})}|\nabla \phi_\tau|^2\,dx < \Upsilon_2 \tau,
\end{equation}
which combined with $0\leq w_\lambda^+\leq u$,  immediately lead to
\begin{equation}\nonumber
\begin{split}
\int_{\Omega_\lambda}|\nabla w_\lambda^+|^2\psi_\varepsilon^2
\phi_{\tau}^2 \,dx \leq \frac{\mathcal{A}_{\delta} \Upsilon}{\mathcal{C}_u-\delta}(\varepsilon +\tau)  + \frac{\mathcal{\bar C}}{\mathcal{C}_u - \delta} \|u\|^2_{L^\infty
(\Omega_\lambda)} \vert \Omega \vert \,,
\end{split}
\end{equation}
where $\Upsilon:=\max\{\Upsilon_1, \Upsilon_2\}$.
By Fatou's Lemma, as $\varepsilon$ and $\tau$ tend to zero, we deduce \eqref{eq:H10test} with $\displaystyle \mathbf{C}( f, |\Omega|, \|u\|_{L^\infty(\Omega_\lambda)}):= \frac{\mathcal{\bar C}}{\mathcal{C}_u - \delta} \|u\|^2_{L^\infty
	(\Omega_\lambda)} \vert \Omega \vert$.

To deduce that $w_\lambda^+ \in H^1_0(\Omega_{\lambda})$, we just note that $w_\lambda^+$ is bounded and then apply standard arguments. See
e.g. Chapter 4 in \cite{evans} having in mind our assumption on the critical set $\Gamma$.

\end{proof}

Now we are ready to prove the main result of this paper.

\begin{proof}[Proof of Theorem \ref{main}]

Let
\begin{equation}\nonumber
\Lambda_0=\{\mathbf{a}<\lambda<0 : u\leq
u_{t}\,\,\,\text{in}\,\,\,\Omega_t\setminus
R_t(\Gamma)\,\,\,\text{for all $t\in(\mathbf{a},\lambda]$}\}.
\end{equation}
In order to apply the moving plane procedure we need to prove some crucial facts. For simplicity of exposition, we divide the proof into three steps.

\textbf{Step 1: we claim that $\Lambda_0 \neq \emptyset$.} Fix  $ \lambda_0 \in (\mathbf{a},0)$ such that
$R_{\lambda_0}(\Gamma) \subset \Omega^c:=\R^n \setminus \Omega$, then for every $\mathbf{a}< \lambda < \lambda_0$, we also have that
$R_\lambda(\Gamma)\subset \Omega^c$.
For any $ \lambda$ in this set we consider (on the domain $\Omega$) the functions $\varphi_{1,\lambda}\,:=\, e^{-u} w_\lambda^+ \phi_{\tau}^2  \chi_{\Omega_\lambda}$ and $\varphi_{2,\lambda}\,:=\, e^{-u_\lambda} w_\lambda^+ \phi_{\tau}^2  \chi_{\Omega_\lambda},$ where $\phi_{\tau}$ is as in \eqref{test2}. We proceed as in the proof of Lemma \ref{leaiuto2}, that is, by Lemma \ref{leaiuto} and a density argument, we can use $\varphi_{1,\lambda}$ and $\varphi_{2,\lambda}$ as test functions in
\eqref{debil1} and \eqref{debil2} so that, subtracting and repeating verbatim the computation done from \eqref{eq:substtest1}  to \eqref{stimaFinalTris}, but excluding the cut-off function $\psi_\varepsilon$, we get
\begin{equation}\nonumber
	\int_{\Omega_\lambda} |\nabla w_\lambda^+|^2
	\phi_{\tau}^2 \,dx  \leq \, \frac{\mathcal{A}_{\delta_3}}{\mathcal{C}_u - \delta}   \int_{\Omega_\lambda}|\nabla \phi_{\tau}|^2  \,dx  +  \frac{\mathcal{\bar C}}{\mathcal{C}_u - \delta} \int_{\Omega_\lambda}(w_\lambda^+)^2 
	\phi_{\tau}^2\,dx,
\end{equation}
where we fixed $\delta=\delta_1+\delta_3>0$ such that $\mathcal{C}_u -\delta>0$, $\mathcal{A}_{\delta_3}:= \|u\|_{L^\infty(\Omega_{\lambda_0})}^2(1+\mathcal{C}_{\delta_3})$,  $\mathcal{\bar C}:=\mathcal{C}_{\delta_1} +  \mathcal{C}_1^2 + \mathcal{L}_g$, $\mathcal{C}_1:=\mathcal{L}_{\text{exp}}  \cdot \| \nabla u\|_{L^\infty(\Omega_{\lambda_0})}$, $\mathcal{C}_{\delta_1}:= \frac{\mathcal{C}_1^2}{4 \delta_1}$, $\mathcal{C}_{\delta_3}:=\frac{\mathcal{C}_1^2}{\delta_3}$
and  $\mathcal{L}_g:=\mathcal{L}_g(\overline{\Omega}_{\lambda_0}, \|u\|_{L^\infty(\Omega_{\lambda_0})})$ is the Lipschitz constant of the function $g(x,t)= e^{-t} f(x, t)$, with  $\mathcal{K}={\overline{\Omega}_{\lambda_0}}$ and $\Lambda= \|u\|^2_{L^\infty
(\Omega_{\lambda_0})}$.

Taking into account the properties of $\phi_\tau$, we deduce that
\begin{equation}\nonumber
\begin{split}
\int_{\Omega_\lambda}|\nabla w_\lambda^+|^2 \phi_{\tau}^2 \,dx \leq
\frac{\mathcal{A}_{\delta_3}\Upsilon_1}{\mathcal{C}_u - \delta} \cdot \tau +\frac{\mathcal{\bar C}}{\mathcal{C}_u - \delta} \int_{\Omega_\lambda}(w_\lambda^+)^2 
\phi_{\tau}^2\,dx.
\end{split}
\end{equation}
By Fatou's Lemma, as $\tau$ tend to, zero we have
\begin{equation}\label{ff}
\begin{split}
\int_{\Omega_\lambda}|\nabla w_\lambda^+|^2  \,dx &\leq
\frac{\mathcal{\bar C}}{\mathcal{C}_u - \delta} \int_{\Omega_\lambda}(w_\lambda^+)^2 \, dx \\
& \leq \frac{\mathcal{\bar C}}{\mathcal{C}_u - \delta} \cdot C_p^2(\Omega_\lambda)
\int_{\Omega_\lambda}|\nabla w_\lambda^+|^2  \,dx,
  \end{split}
\end{equation}
where $C_p(\cdot)$ is the Poincar\'e constant (in the Poincar\'e inequality in $H^1_0(\Omega_\lambda)$).
Since $C_p^2(\Omega_\lambda) \to 0$ as $ \lambda \to \mathsf{a}$, we can find $ \lambda_1 \in (\mathsf{a}, \lambda_0)$, such that
$$\forall \lambda \in (\mathsf{a}, \lambda_1) \qquad \frac{\mathcal{\bar C}}{\mathcal{C}_u - \delta} \cdot C_p^2(\Omega_\lambda)< \frac{1}{2}\,, $$ 
and hence by \eqref{ff}, we deduce that
$$ \forall \lambda \in (\mathbf{a}, \lambda_1) \qquad \int_{\Omega_\lambda}|\nabla w_\lambda^+|^2  \,dx \leq 0,$$ 
proving that $u \leq u_\lambda$ in $\Omega_\lambda \setminus R_\lambda(\Gamma)$ for $\lambda$ close to $\mathbf{a}$, which implies the desired conclusion  $\Lambda_0 \neq \emptyset$.

\noindent Now we can set
 \begin{equation}\nonumber
\lambda_0=\sup\,\Lambda_0.
\end{equation}

\textbf{Step 2: we claim that $\lambda_0=0$.} To this end we assume that
$\lambda_0<0$ and we reach a contradiction by proving that $u\leq
u_{\lambda_0+\nu}$ in $\Omega_{\lambda_0+\nu}\setminus
R_{\lambda_0+\nu}(\Gamma)$ for any $0<\nu<\bar\nu$ for some small
$\bar\nu>0$. By continuity, we know that $u\leq u_{\lambda_0}$ in
$\Omega_{\lambda_0}\setminus R_{\lambda_0}(\Gamma)$.
Since $\Omega$ is convex in the $x_1-$direction and the set $ R_{\lambda_0}(\Gamma)$ lies in the hyperplane $\Pi_{- 2 \lambda_0}$, we see that $\Omega_{\lambda_0}\setminus R_{\lambda_0}(\Gamma)$ is open and connected. Therefore, by the strong maximum principle we deduce that
$u< u_{\lambda_0}$ in $\Omega_{\lambda_0}\setminus R_{\lambda_0}(\Gamma)$ (here we have used that
$ u, u_{\lambda_0}\in C^1(\Omega_{\lambda_0}\setminus R_{\lambda_0}(\Gamma)) $ by standard elliptic estimate). 

Now, we note that for any compact set $\mathcal{X} \subset \Omega_{\lambda_0}\setminus R_{\lambda_0}(\Gamma)$, there exists $\nu=\nu(\mathcal{X},\lambda_0)>0$ sufficiently small such that $\mathcal{X} \subset \Omega_{\lambda} \setminus R_{\lambda}(\Gamma)$ for every $ \lambda \in [\lambda_0, \lambda_0 + \nu].$ Consequently, $u$ and $u_{\lambda}$ are well defined on $\mathcal{X}$ for every $ \lambda \in [\lambda_0, \lambda_0 + \nu].$ Hence, by the uniform continuity of the function $ h(x,\lambda) := u(x) - u(2\lambda-x_1,x') $ on the compact set $\mathcal{X} \times [\lambda_0, \lambda_0 + \nu]$ we can ensure that  $\mathcal{X} \subset \Omega_{\lambda_0 + \nu} \setminus R_{\lambda_0 + \nu}(\Gamma)$ and
$u< u_{\lambda_0+\nu}$ in $\mathcal{X}$ for any $0 \le \nu<\bar\nu$, for some $\bar\nu = \bar\nu(\mathcal{X},\lambda_0)>0$ small. Clearly, we can also assume that $\bar\nu < \frac{\vert \lambda_0 \vert}{4}. $

Let us consider $\psi_\varepsilon$ constructed
in such a way that it vanishes in a neighborhood  of $R_{\lambda_0 +
\nu}(\Gamma)$ and $\phi_{\tau}$ constructed in such a way it
becomes zero in a neighborhood  of $\gamma_{\lambda_0+\nu}=\partial
\Omega \cap T_{\lambda_0+ \nu}$. Thanks to Lemma \ref{leaiuto2}, the functions
$$\varphi_{1, \lambda_0+\nu}\,:= \begin{cases}
	\, \, e^{-u} w_{\lambda_0+\nu}^+\psi_\varepsilon^2 \phi_{\tau}^2 \, & \text{in}\quad {\Omega_{\lambda_0+\nu}} \\
	0 &  \text{in}\quad {\R^n \setminus \Omega_{\lambda_0+\nu}}
\end{cases}$$
and
$$\varphi_{2,\lambda_0+\nu}\,:= \begin{cases}\, e^{-u_{\lambda_0+\nu}} w_{\lambda_0+\nu}^+ \psi_\varepsilon^2 \phi_{\tau}^2  & \text{in}\quad\Omega_{\lambda_0+\nu} \\
0 &  \text{in}\quad \R^n\setminus \Omega_{\lambda_0+\nu}
\end{cases}$$

are such that $ \varphi_{1, \lambda_0+\nu} \to  w_{\lambda_0+\nu}^+$,  $\varphi_{2, \lambda_0+\nu} \to  w_{\lambda_0+\nu}^+$ in $H^1_0(\Omega_{\lambda_0+\nu})$,  as $\varepsilon$ and $\tau$ tend to zero. Moreover, $\varphi_{1, \lambda_0+\nu}, \varphi_{2, \lambda_0+\nu} \in C^{0,1}(\overline{\Omega}_{\lambda_0+\nu})$ and ${\varphi_{1, \lambda_0+\nu}}_{\vert _{\partial \Omega_{\lambda_0+\nu}}} = {\varphi_{2, \lambda_0+\nu}}_{\vert _{\partial \Omega_{\lambda_0+\nu}}}=0$, by Lemma \ref{leaiuto}, and ${\varphi_{1, \lambda_0+\nu}}_{\vert _{\partial \Omega_{\lambda_0+\nu}}} = {\varphi_{2, \lambda_0+\nu}}_{\vert _{\partial \Omega_{\lambda_0+\nu}}}=0$ on an open neighborhood of $\mathcal{X}$, by the above argument.
Therefore, $ \varphi_{1, \lambda_0+\nu}, \varphi_{2, \lambda_0+\nu} \in H^1_0 (\Omega_{\lambda_0+\nu} \setminus \mathcal{X})$ and thus, also $w_{\lambda_0+\nu}^+$ belongs to  $H^1_0(\Omega_{\lambda_0+\nu} \setminus \mathcal{X})$. We also note that $ \nabla w_{\lambda_0+\nu}^+ = 0$ on an open neighborhood of $\mathcal{X}$.

\noindent Now we argue as in Lemma \ref{leaiuto2} and we plug $\varphi_{1, \lambda_0+\nu}, \varphi_{2, \lambda_0+\nu}$ as test functions respectively in \eqref{debil1} and \eqref{debil2} so that, by subtracting and repeating verbatim the computations from \eqref{eq:substtest1}  to \eqref{stimaFinalTris}, we get 
\begin{equation}\label{eq:stimaFinaleContrMoving}
	\begin{split}
		\int_{\Omega_{\lambda_0+\nu}} |\nabla w_{\lambda_0+\nu}^+|^2\psi_\varepsilon^2
		\phi_{\tau}^2 \,dx  \leq& \, \frac{\mathcal{A}_\delta}{\mathcal{C}_u - \delta} \left( \int_{\Omega_{\lambda_0+\nu}}|\nabla \psi_\varepsilon|^2 \,dx 
		+ \int_{\Omega_{\lambda_0+\nu}}|\nabla \phi_{\tau}|^2  \,dx \right)\\
		& +  \frac{\mathcal{\bar C}}{\mathcal{C}_u - \delta} \int_{\Omega_{\lambda_0+\nu}}(w_{\lambda_0+\nu}^+)^2 \psi_\varepsilon^2
		\phi_{\tau}^2\,dx,
	\end{split}
\end{equation}
where we fixed $\delta:=\delta_1+ \delta_2 + \delta_3>0$ such that $\mathcal{C}_u -\delta>0$,  $\mathcal{\bar C}:=\mathcal{C}_{\delta_1} +  2\mathcal{C}_1^2 + \mathcal{L}_g$, $\mathcal{A}_\delta := \max\{\mathcal{A}_{\delta_2}, \mathcal{A}_{\delta_3}\}$, $\mathcal{A}_{\delta_2}:= \|u\|_{L^\infty(\Omega_{\lambda_0+\nu})}^2(1+\mathcal{C}_{\delta_2})$, $\mathcal{A}_{\delta_3}:=\|u\|_{L^\infty(\Omega_{\lambda_0+\nu})}^2(1+\mathcal{C}_{\delta_3})$, $\mathcal{C}_1:=\mathcal{L}_{\text{exp}}  \cdot \| \nabla u\|_{L^\infty(\Omega_{\lambda_0+\nu})}$, $\mathcal{C}_{\delta_1}:= \frac{\mathcal{C}_1^2}{4 \delta_1}$, $\mathcal{C}_{\delta_2}:= \frac{\mathcal{C}_1^2}{\delta_2}$, $\mathcal{C}_{\delta_3}:= \frac{\mathcal{C}_1^2}{\delta_3}$ and  $\mathcal{L}_g:=\mathcal{L}_g(\overline{\Omega}_{\lambda_0+\nu}, \|u\|_{L^\infty(\Omega_{\lambda_0+\nu})})$ is the Lipschitz constant of the function $g(x,t)= e^{-t} f(x, t)$, with  $\mathcal{K}={\overline{\Omega}_{\lambda_0+\nu} \setminus R_{\lambda_0 + \nu}(\Gamma)}$ and $\Lambda= \|u\|^2_{L^\infty(\Omega_{\lambda_0+\nu})}$.

Passing to the limit, as $(\epsilon, \tau) \to (0,0),$ in the latter
we get

\begin{equation}\label{sdjfhsfskl}
\begin{split}
\int_{\Omega_{\lambda_0+\nu}\setminus \mathcal{X}}|\nabla
w_{\lambda_0+\nu}^+|^2\,dx &\leq \frac{\mathcal{\bar C}}{\mathcal{C}_u - \delta} 
\int_{\Omega_{\lambda_0+\nu}\setminus \mathcal{X}}  (w_{\lambda_0+\nu}^+)^2\,dx\\
&\leq \frac{\mathcal{\bar C}}{\mathcal{C}_u - \delta} \cdot
C_p^2(\Omega_{\lambda_0+\nu}\setminus \mathcal{X})
\int_{\Omega_{\lambda_0+\nu}\setminus \mathcal{X}}   |\nabla
w_{\lambda_0+\nu}^+|^2\,dx\, ,
  \end{split}
\end{equation}
where $C_p(\cdot)$ is the Poincar\'e constant (of the Poincar\'e inequality in $H^1_0(\Omega_{\lambda_0+\nu}\setminus \mathcal{X})$).
{Now we recall that $C_p^2(\Omega_{\lambda_0+\nu}\setminus \mathcal{X}) \le \mathcal{Q}(n) \vert \Omega_{\lambda_0+\nu}\setminus \mathcal{X} \vert ^{\frac{2}{n}} $, where $\mathcal{Q}=\mathcal{Q}(n)$ is a positive constant depending only on the dimension $n$, and therefore, by summarizing, we have proved that for every compact set $\mathcal{X} \subset \Omega_{\lambda_0}\setminus R_{\lambda_0}(\Gamma)$ there is a small $\bar\nu = \bar\nu(\mathcal{X}, \lambda_0) \in \left(0, \frac{\vert \lambda_0 \vert}{4}\right)$ such that for every $ 0 \le \nu < \bar\nu$ we have
\begin{equation}\label{sdjfhsfskl2}
\begin{split}
&\int_{\Omega_{\lambda_0+\nu}\setminus \mathcal{X}}|\nabla
w_{\lambda_0+\nu}^+|^2\,dx \leq \frac{\mathcal{\bar C}}{\mathcal{C}_u - \delta} \cdot \mathcal{Q}(n) \vert \Omega_{\lambda_0+\nu}\setminus \mathcal{X} \vert ^{\frac{2}{n}}
\int_{\Omega_{\lambda_0+\nu}\setminus \mathcal{X}}   |\nabla
w_{\lambda_0+\nu}^+|^2\,dx.
\end{split}
\end{equation}
Now we first fix a compact $\mathcal{X} \subset \Omega_{\lambda_0}\setminus R_{\lambda_0}(\Gamma)$ such that
$$\vert \Omega_{\lambda_0}\setminus \mathcal{X} \vert^{\frac{2}{n}} < \left[ \frac{2\mathcal{\bar C}}{\mathcal{C}_u - \delta} \cdot \mathcal{Q}(n) \right]^{-1},$$
this is possible since $ \vert  R_{\lambda_0}(\Gamma)\vert  =0$ by the assumption on $ \Gamma$, and then we take $ \bar\nu_0 < \bar\nu $ such that for every $ 0 \le \nu < \bar\nu_0$ we have
$\vert \Omega_{\lambda_0 + \nu} \setminus \Omega_{\lambda_0}
\vert^{\frac{2}{n}}< \left[ \frac{2\mathcal{\bar C}}{\mathcal{C}_u - \delta} \cdot \mathcal{Q}(n) \right]^{-1}$. Inserting those
information into \eqref{sdjfhsfskl2} we immediately get that
\begin{equation}\label{sdjfhsfskl3}
\begin{split}
&\int_{\Omega_{\lambda_0+\nu}\setminus \mathcal{X}}|\nabla
w_{\lambda_0+\nu}^+|^2\,dx  < \frac{1}{2}
\int_{\Omega_{\lambda_0+\nu}\setminus \mathcal{X}}   |\nabla
w_{\lambda_0+\nu}^+|^2\,dx
\end{split}
\end{equation}
and so $\nabla w_{\lambda_0+\nu}^+=0$ on $ \Omega_{\lambda_0+\nu} \setminus \mathcal{X}$ for every $ 0 \le \nu < \bar\nu_0$. On the other hand, we recall that $\nabla w_{\lambda_0+\nu}^+=0$ on an open neighbourhood of $\mathcal{X}$ for every $ 0 \le \nu < \bar\nu$, thus $\nabla w_{\lambda_0+\nu}^+=0$ on $ \Omega_{\lambda_0+\nu} $ for every $ 0 \le \nu < \bar\nu_0$. The latter proves that $u\leq u_{\lambda_0+\nu}$ in
$\Omega_{\lambda_0+\nu}\setminus R_{\lambda_0+\nu}(\Gamma)$ for every
$0<\nu<\bar\nu_0$. Such a contradiction shows that
\[
\lambda_0=0\,.
\]
}

\textbf{Step 3: conclusion.} Since the moving plane procedure can be
performed in the same way but in the opposite direction, this
proves the desired symmetry result.  The fact that the solution is
increasing in the $x_1$-direction in $\{x_1<0\}$ is implicit in the
moving plane procedure. Since $u$ has $\mathcal{C}^1$ regularity, by standard elliptic estimate, the fact that $\frac{\partial u}{\partial x_1}$ is positive for $x_1<0$
follows by the classical maximum principle.

\end{proof}

\end{document}